\theoremstyle{plain}%plain,definition,remark. need amsthm package
\newtheorem{thm}{Theorem}[section]
\newtheorem{lem}[thm]{Lemma}
\newtheorem{prop}[thm]{Proposition}
\theoremstyle{definition}
\theoremstyle{remark}
\def\dgr{\texttt{dgr}}
\def\dgs{\texttt{dgs}}
\def\mod{\text{mod}\ }
\begin{document}

\title {\bf An equation about sum of primes with digital sum constraints}
\author{\it Haifeng Xu\thanks{\url{mailto:hfxu@yzu.edu.cn} The work is partially supported by the University Science Research Project of Jiangsu Province (14KJB110027) and the Foundation of Yangzhou University 2014CXJ004.}
}
\date{\small\today}

\maketitle

\begin{abstract}
We know that any prime number of form $4s+1$ can be written as a sum of two perfect square numbers. As a consequence of Goldbach's weak conjecture, any number great than $10$ can be represented as a sum of four primes. We are motivated to consider an equation with some constraints about digital sum for the four primes. And we conclude that the square root of the digital sum of the four primes will greater than $4$ and will not be a multiple of $3$ if the equation has solutions. In the proof, we give the method of determining whether a number is a perfect square.
\end{abstract}

\noindent{\small {\bf MSC2010:} 11A41.\\
{\bf Keywords: Goldbach's weak conjecture, digital sum, perfect square number}
}

%------------------------------
% Section 0. Introduction
\section{Introduction}
The Goldbach's weak conjecture \cite{weak-Goldbach-conjecture} says that every odd number greater than 5 can be expressed as the sum of three primes. (A prime may be used more than once in the same sum.)

In 1923, Hardy and Littlewood showed that, assuming the generalized Riemann hypothesis, the odd Goldbach conjecture is true for all sufficiently large odd numbers.

In 1937, Ivan Matveevich Vinogradov eliminated the dependency on the generalised Riemann hypothesis and proved directly that all sufficiently large odd numbers can be represented as a sum of three primes.

In 2013, Harald Helfgott proved the Goldbach's weak conjecture.

As a consequence of Goldbach's weak conjecture, any even number $N$ great than $10$ can be represented as a sum of four primes. Since any prime like $4s+1$ can be represented as a sum of two squares, we are motivated to consider the following equations.

Let $x_1, x_2, x_3, x_4$ be primes which satisfy the following condistions:

\begin{equation}\label{eqn:main}
\begin{cases}
x_1+x_2+x_3+x_4=a^2,\\
\sum_{i=1}^{4}\dgs(x_i)=b^2,\\
\dgs(a^2)+\dgs(b^2)=\dgs(a^2+b^2),\\
a^2+b^2 \ \text{is a prime of the form}\ 4s+1,\\
\end{cases}
\end{equation}
where $a,b$ and $s$ are positive integers. And $\dgs(n)$ is the sum of all digits of the integer $n$ in its decimal representation.

We may ask the following questions.
\begin{itemize}
\item(i)  Are there infinitely many solutions?
\item(ii) Which values will $b$ take?
\end{itemize}

In this article, we prove that $b>4$ and will not be a multiple of $3$.

\section{Some computation results}

Here we only list one of the solutions of $\{x_1,x_2,x_3,x_4\}$ for each pair $(a,b)$. We use $p_j$ to stand for the $j$-th prime number.

\begin{figure}[htbp]
  \centering
  \begin{tabular}{|r|r|r|r|r|c|}
     \hline
     % after \\: \hline or \cline{col1-col2} \cline{col3-col4} ...
     $x_1$  &  $x_2$   &  $x_3$   &  $x_4$   &  $a^2 + b^2 = 4s+1$ & (a,b)\\
     \hline
  2  & 29  & 997  & 1997  & 3025 + 64 = 3089 & (55, 8)\\
  3  &  3  &  11  & 3347  & 3364 + 25 = 3389 & (58, 5)\\
  3  &  3  & 101  & 4517  & 4624 + 25 = 4649 & (68, 5)\\
  3  &  3  & 107  &  911  & 1024 + 25 = 1049 & (32, 5)\\
  3  &  3  &4217  & 7877  &12100 + 49 =12149 & (110, 7)\\
  3  & 19  & 179  &  199  &  400 + 49 =  449 & (20, 7)\\
 11  & 11  &5521  & 7001  &12544 + 25 =12569 & (112, 5)\\
     \hline
   \end{tabular}
  \caption{ $x_i\in\{p_1,p_2,\ldots, p_{1000}\}$}
  \label{table:solutions1}
\end{figure}

\begin{figure}[htbp]
  \centering
  \begin{tabular}{|r|r|r|r|r|c|}
     \hline
     % after \\: \hline or \cline{col1-col2} \cline{col3-col4} ...
     $x_1$  &  $x_2$   &  $x_3$   &  $x_4$   &  $a^2 + b^2 = 4s+1$ & (a,b)\\
     \hline
 8101 & 10301 & 10301 & 12101 &  40804 + 25 = 40829 & (202, 5)\\
11003 & 17033 & 17123 & 17341 & 62500 + 49 = 62549 & (250, 7)\\
     \hline
   \end{tabular}
  \caption{ $x_i\in\{p_{1001},\ldots, p_{2000}\}$}
  \label{table:solutions2}
\end{figure}

\begin{figure}[htbp]
  \centering
  \begin{tabular}{|r|r|r|r|r|c|}
     \hline
     % after \\: \hline or \cline{col1-col2} \cline{col3-col4} ...
     $x_1$  &  $x_2$   &  $x_3$   &  $x_4$   &  $a^2 + b^2 = 4s+1$ & (a,b)\\
     \hline
17393 & 19889 & 19979 & 26839 & 84100 + 121 = 84221 & (290, 11)\\
24799 & 26879 & 27299 & 27299 &106276 + 121 = 106397& (326,11)\\
     \hline
   \end{tabular}
  \caption{ $x_i\in\{p_{2001},\ldots, p_{3000}\}$}
  \label{table:solutions3}
\end{figure}

%------------------------------
% Section 1.
\section{Preliminary}

\begin{lem}\label{lem:1}
A square number must be one of the following forms: $16n$, $16n+1$, $16n+4$, $16n+9$.
\end{lem}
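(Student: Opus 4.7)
The claim is that $k^2 \bmod 16 \in \{0,1,4,9\}$ for every integer $k$. The plan is a straightforward case analysis on the parity of $k$, reducing modulo $16$ in each branch, which is exactly the kind of argument Fermat-style descent / quadratic residue tables use at the prime power $2^4$.

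First I would split into the two cases $k$ even and $k$ odd. If $k$ is even, write $k=2m$ so that $k^2=4m^2$. Splitting again on the parity of $m$: if $m=2j$ then $k^2=16j^2$, which is of the form $16n$; if $m=2j+1$ then $m^2=4j(j+1)+1$, and since $j(j+1)$ is even one has $m^2\equiv 1\pmod 4$, so $4m^2\equiv 4\pmod{16}$, giving the form $16n+4$. If $k$ is odd, write $k=2m+1$; then
\[
k^2 = 4m(m+1)+1.
\]
Because $m(m+1)$ is always even, set $m(m+1)=2t$ to get $k^2=8t+1$. A second look at $t$ modulo $2$ (equivalently, one more case split on $m\bmod 4$) shows $t$ can be either even or odd, yielding $k^2\equiv 1\pmod{16}$ or $k^2\equiv 9\pmod{16}$, i.e. the forms $16n+1$ and $16n+9$.

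As a sanity check I would also note that one can bypass the case analysis entirely by tabulating $r^2\bmod 16$ for $r=0,1,\dots,15$ and observing that only the residues $0,1,4,9$ occur; since squaring is periodic mod $16$, this finishes the proof. I do not anticipate an obstacle: there is no deep arithmetic content beyond the observation that $m(m+1)$ is always even, which is what produces the extra factor of $2$ needed to move from mod $8$ information to mod $16$ information. The only thing to be careful about is not stopping at the classical mod-$8$ statement (that odd squares are $\equiv 1\pmod 8$), since here we genuinely need the finer mod-$16$ refinement splitting odd squares into the residues $1$ and $9$.
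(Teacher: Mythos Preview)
Your argument is correct; the residue computation modulo $16$ is done cleanly, and the alternative of simply tabulating $r^2\bmod 16$ for $r=0,\dots,15$ is also valid. The paper itself states this lemma without proof, treating it as a standard fact, so there is nothing to compare against and your write-up would serve perfectly well as the omitted justification.
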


\begin{lem}\label{lem:2}
\[
\dgs(m)+\dgs(n)\equiv\dgs(m+n)(\text{mod}\ 9).
\]
\end{lem}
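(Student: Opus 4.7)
The plan is to reduce the claim to the classical fact that every positive integer is congruent to its digital sum modulo $9$; once that is established, the lemma follows by transitivity in a single line.

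First I would prove the auxiliary statement $n \equiv \dgs(n) \pmod{9}$ for every positive integer $n$. Writing the decimal expansion $n = \sum_{i=0}^{k} d_i \, 10^i$ with $d_i \in \{0,1,\dots,9\}$, the key observation is that $10 \equiv 1 \pmod 9$, hence $10^i \equiv 1 \pmod 9$ for all $i \geq 0$. Summing, one obtains
\[
n = \sum_{i=0}^{k} d_i \, 10^i \equiv \sum_{i=0}^{k} d_i = \dgs(n) \pmod 9,
\]
which is the well-known "casting out nines" identity. (Alternatively this can be proved by induction on the number of digits, since adding a new leading digit $d$ multiplies by $10$ and then adds $d$, preserving the residue modulo $9$.)

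With this in hand, the lemma is immediate: applying the auxiliary statement separately to $m$, $n$, and $m+n$ gives
\[
\dgs(m) + \dgs(n) \equiv m + n \equiv \dgs(m+n) \pmod 9.
\]
The only step requiring any care is the first one, and even that is routine; there is no real obstacle here. The lemma is essentially a restatement of the fact that $\dgs$ is a homomorphism from $(\mathbb{Z}_{>0}, +)$ to $(\mathbb{Z}/9\mathbb{Z}, +)$.
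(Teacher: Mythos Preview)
Your proof is correct and is in fact the standard ``casting out nines'' argument: once one knows $n\equiv\dgs(n)\pmod 9$, the lemma is a one-line consequence. The paper, however, argues differently. It observes directly that when one adds $m$ and $n$ column by column in base ten, each carry reduces the digit sum of the result by exactly $9$ compared to $\dgs(m)+\dgs(n)$, and hence obtains the sharper identity
\[
\dgs(m)+\dgs(n)=\dgs(m+n)+9s,
\]
where $s$ is precisely the number of carries in the addition. Your argument gives the congruence modulo $9$ immediately but does not identify the integer quotient; the paper's carry-counting argument does. This extra information is not idle: later in the paper (in the proof that $b\neq 4$) the author repeatedly computes the value of $s$ and then interprets ``$s=0$'' or ``$s=1$'' as ``the addition has no carry'' or ``has one carry'' to constrain the trailing digits of $a^2$. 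So while your route is cleaner for the bare congruence, the paper's approach packages a refinement that is actually used downstream.
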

\begin{proof}
We explain it by example. For the equation $29+96=125$, we have
\[
\dgs(29)+\dgs(96)=\dgs(29+96)+9\cdot 2,
\]
where there are two carry flag in the sum $29+96$. For each carry flag, we should subtract $9$ from the sum of $\dgs(29)+\dgs(96)$ then the result equals $\dgs(29+96)$.

Thus, it also provide an algorithm of addition. For two positive integers $m$ and $n$. Without loss of generality, we can assume they have both $r$ digits in the decimal representations. (It means, one of the digits $m_r$ or $n_r$ may be zero.)
\[
\begin{aligned}
m&=m_{r}m_{r-1}\cdots m_2 m_1 m_0,\\
n&=n_{r}n_{r-1}\cdots n_2 n_1 n_0,\\
\end{aligned}
\]
If $m_{i}+n_{i}\geqslant 10$ for some $i=k_1,\ldots,k_s$. Then
\[
\dgs(m)+\dgs(n)=\dgs(m+n)+9\cdot s.
\]
\end{proof}

Let $\dgr(m)$ be the digit root(i.e., repeated digital sum) of number $m$. It can also be defined as
\[
\dgr(m)=1+((m-1)\mod 9).
\]

Then we have
\begin{lem}[\cite{digit_root}]\label{eqn:dgr-formula}
(1) $\dgr(m+n)=\dgr(\dgr(m)+\dgr(n))$;\\
(2) $\dgr(n_1+n_2+\cdots+n_m)=\dgr(\dgr(n_1)+\dgr(n_2)+\cdots+\dgr(n_m))$;\\
(3) $\dgr(m\cdot n)=\dgr(\dgr(m)\cdot\dgr(n))$;\\
(4) $\dgr(n^k)=\dgr(\dgr^k(n))$.
\end{lem}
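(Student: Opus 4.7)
The plan is to reduce every one of the four identities to a single observation: the digit root is a well-defined section of reduction modulo $9$ taking values in $\{1,2,\ldots,9\}$. Concretely, from the definition $\dgr(m)=1+((m-1)\bmod 9)$ one checks directly that $\dgr(m)\equiv m\pmod{9}$ for every positive integer $m$, and conversely that two elements of $\{1,\ldots,9\}$ which are congruent modulo $9$ must be equal. Once this bijective correspondence is in hand, each claim is just a one-line congruence computation.

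For (1), applying the reduction twice gives
\[
\dgr(\dgr(m)+\dgr(n))\equiv \dgr(m)+\dgr(n)\equiv m+n\equiv \dgr(m+n)\pmod{9},
\]
and since both sides lie in $\{1,\ldots,9\}$ they coincide. This is essentially Lemma~\ref{lem:2} repackaged for the digit root instead of the digit sum, the carry-count term $9s$ disappearing once we pass to congruences. Part (2) then follows by a straightforward induction on the number of summands, using (1) at each step to absorb one more term.

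For (3), replace addition by multiplication in the same display: $\dgr(m)\cdot\dgr(n)\equiv m\cdot n\pmod{9}$, and comparing the two representatives in $\{1,\ldots,9\}$ of this common residue class gives $\dgr(mn)=\dgr(\dgr(m)\cdot\dgr(n))$. Part (4), in which $\dgr^k(n)$ is to be read as $(\dgr(n))^k$, then follows by induction on $k$: the base case $k=1$ is trivial, and the inductive step
\[
\dgr(n^{k+1})=\dgr(n^k\cdot n)=\dgr(\dgr(n^k)\cdot\dgr(n))=\dgr(\dgr(n)^k\cdot \dgr(n))=\dgr(\dgr(n)^{k+1})
\]
uses (3) and the inductive hypothesis.

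The main obstacle is really only a notational one: one must be careful with the convention that $\dgr$ returns $9$ rather than $0$ on multiples of $9$, so that the map from residue classes modulo $9$ to $\{1,\ldots,9\}$ is a bijection and the last step of each argument (''equal residues $\Rightarrow$ equal values'') is legitimate. Once that convention is fixed by the formula $\dgr(m)=1+((m-1)\bmod 9)$ given in the text, every identity in the lemma is an immediate consequence of the ring homomorphism $\mathbb{Z}\to\mathbb{Z}/9\mathbb{Z}$, which is presumably the argument given in the cited reference \cite{digit_root}.
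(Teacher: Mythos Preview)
Your argument is correct, but the route differs from the paper's. You work entirely through the characterization $\dgr(m)\equiv m\pmod 9$ with values in $\{1,\dots,9\}$, so that each identity reduces to the fact that $\mathbb{Z}\to\mathbb{Z}/9\mathbb{Z}$ is a ring homomorphism, followed by the observation that two representatives in $\{1,\dots,9\}$ of the same residue class must coincide. The paper instead argues combinatorially: for (3) it literally rewrites $m\cdot n$ as the $m$-fold sum $n+\cdots+n$, applies the already-proved additive identity (2), and then reverses the roles of $m$ and $n$; (4) is done by a similar unwinding and induction that again routes through (2) and (3). Your approach is shorter and more conceptual, and makes clear why all four parts are really a single statement; the paper's approach has the virtue of being entirely elementary and self-contained, never invoking modular arithmetic beyond what is implicit in the definition, at the cost of longer manipulations. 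One small point: in your inductive step for (4), the passage from $\dgr(\dgr(n^k)\cdot\dgr(n))$ to $\dgr(\dgr(n)^k\cdot\dgr(n))$ via the hypothesis $\dgr(n^k)=\dgr(\dgr(n)^k)$ silently uses that $\dgr(\dgr(a)\cdot b)=\dgr(a\cdot b)$, which is immediate from (3) together with the idempotence $\dgr(\dgr(a))=\dgr(a)$; it would be cleaner still to bypass the induction entirely and note $\dgr(n)^k\equiv n^k\pmod 9$ directly.
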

\begin{proof}
(1) The first identity will be inferred by the definition of the digital root.

(2) We prove the second as an example in the case of $m=3$. The general formula will be proved by induction. By (1), we have
\[
\begin{split}
\dgr(n_1+n_2+n_3)&=\dgr(\dgr(n_1)+\dgr(n_2+n_3))\\
&=\dgr\bigl(\dgr(n_1)+\dgr(\dgr(n_2)+\dgr(n_3))\bigr)\\
&=\dgr\bigl(\dgr(\dgr(n_1))+\dgr(\dgr(n_2)+\dgr(n_3))\bigr)\\
&=\dgr\bigl(\dgr(n_1)+\dgr(n_2)+\dgr(n_3)\bigr).
\end{split}
\]

(3)
\[
\begin{split}
\dgr(m\cdot n)&=\dgr(\sum_{i=1}^{m}n)\\
&\xlongequal[]{(2)}\dgr(\sum_{i=1}^{m}\dgr(n))\\
&=\dgr(\dgr(n)\cdot m)\\
&=\dgr(\sum_{j=1}^{\dgr(n)}m)\\
&\xlongequal[]{(2)}\dgr(\sum_{j=1}^{\dgr(n)}\dgr(m))\\
&=\dgr(\dgr(m)\cdot\dgr(n)).
\end{split}
\]

(4) For $n=3$,
\[
\begin{split}
\dgr(n^3)&=\dgr(n\cdot n^2)\\
&=\dgr(\sum_{i=1}^{n}\dgr(n^2))\\
&=\dgr(n\cdot\dgr(n^2))\\
&=\dgr(\dgr(n^2)\cdot\dgr(n))\\
&\xlongequal[]{(3)}\dgr\bigl(\dgr(n)\cdot\dgr(\dgr^2(n))\bigr),\\
\end{split}
\]
on the other hand,
\[
\begin{split}
\dgr(\dgr^3(n))&=\dgr(\dgr(n)\cdot\dgr^2(n))\\
&=\dgr(\sum_{i=1}^{\dgr(n)}\dgr^2(n))\\
&=\dgr(\sum_{i=1}^{\dgr(n)}\dgr(\dgr^2(n)))\\
&=\dgr\bigl(\dgr(n)\cdot\dgr(\dgr^2(n))\bigr).
\end{split}
\]
Hence,
\[
\dgr(n^3)=\dgr(\dgr^3(n)).
\]
Suppose the forth formula in Lemma \ref{eqn:dgr-formula} is true for $k-1$. Then
\[
\begin{split}
\dgr(n^k)&=\dgr(n\cdot n^{k-1})=\dgr(n\cdot\dgr(n^{k-1}))\\
&=\dgr(\dgr(n^{k-1})\cdot\dgr(n))\\
&=\dgr\bigl(\dgr(n)\cdot\dgr(\dgr^{k-1}(n))\bigr).
\end{split}
\]
On the other hand,
\[
\dgr(\dgr^k(n))=\dgr(\dgr(n)\cdot\dgr^{k-1}(n))=\dgr\bigl(\dgr(n)\cdot\dgr(\dgr^{k-1}(n))\bigr).
\]
Therefore, $\dgr(n^k)=\dgr(\dgr^k(n))$.
\end{proof}

Here are some simple observations.\\
(1) $\dgs(p)\geqslant 2$ holds for all prime numbers $p$.\\
(2) If $p$ is a prime and $\dgs(p)=3$, then $p=3$.

\section{main results}

\begin{prop}
$b$ is great than $3$.
\end{prop}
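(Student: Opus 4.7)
The plan is to dispose of the small values of $b$ in two stages. First, observation~(1) at the end of Section~3 records $\dgs(p)\geq 2$ for every prime $p$, because the only positive integers with digital sum $1$ are the powers of $10$, none of which is prime. Hence the second line of \eqref{eqn:main} gives
\[
b^2=\sum_{i=1}^{4}\dgs(x_i)\geq 2+2+2+2=8,
\]
forcing $b\geq 3$. Thus the proposition reduces to ruling out $b=3$.

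Assume for contradiction $b=3$, so $b^2=9$. Iterating Lemma~\ref{lem:2} over the decomposition $x_1+x_2+x_3+x_4=a^2$, together with the standard identity $\dgs(n)\equiv n\pmod 9$ (which follows from $10\equiv 1\pmod 9$), yields
\[
0\equiv b^2=\sum_{i=1}^{4}\dgs(x_i)\equiv\sum_{i=1}^{4}x_i=a^2\pmod 9.
\]
Therefore $3\mid a$, and then $9\mid a^2+b^2$. Since $a\geq 1$ we have $a^2+b^2\geq 10$, so $a^2+b^2$ is a multiple of $9$ strictly greater than $9$ and hence composite, contradicting the fourth line of \eqref{eqn:main}.

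I do not expect any substantive obstacle: the whole proof is a short mod-$9$ calculation. The only point that merits care is the use of $\dgs(n)\equiv n\pmod 9$, which is not stated explicitly in the paper but is an immediate consequence of the decimal expansion $n=\sum_i d_i 10^i$ and $10\equiv 1\pmod 9$. As a bonus, the same computation shows $3\nmid b$ in general, recovering the stronger claim about multiples of $3$ announced in the introduction.
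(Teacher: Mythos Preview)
Your proof is correct and much cleaner than the paper's argument for this proposition. The paper handles $b=3$ by a direct case analysis: it rules out $x_1=2$ via the $4s+1$ condition, then uses $\sum_i\dgs(x_i)=9$ together with $\dgs(x_i)\geq 2$ to force the primes into the shapes $3$ or $10^k+1$, and finally checks that no such combination makes $a^2$ a perfect square with $a^2+9$ prime. Your mod-$9$ argument is precisely what the paper deploys for its \emph{next} proposition (that $3\nmid b$), which it prefaces with the remark ``By using digit sum, we will have a more simpler proof and get a general result''---so you have found the intended simplification one proposition early. One minor difference: the paper's version of the mod-$9$ argument routes through the third condition $\dgs(a^2)+\dgs(b^2)=\dgs(a^2+b^2)$ to reach $9\mid\dgs(a^2+b^2)$, whereas you pass directly from $9\mid a^2$ and $9\mid b^2$ to $9\mid a^2+b^2$, showing that this hypothesis is in fact not needed for the $3\nmid b$ conclusion.
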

\begin{proof}
First, by observation above, $\sum_{i=1}^{4}\dgs(x_i)\geqslant 8$. Thus we have $b>2$. Suppose there exist four primes $x_1\leqslant x_2\leqslant x_3\leqslant x_4$ such that
\[
\begin{cases}
x_1+x_2+x_3+x_4=a^2,\\
\sum_{i=1}^{4}\dgs(x_i)=3^2,\\
a^2+9\ \text{is a prime like the form }\ 4s+1.
\end{cases}
\]
If $x_1=2$, then $a^2$ is odd and thus $a^2+9$ will not be the form $4s+1$. Hence, we assume $x_1\geqslant 3$. Since $\sum_{i=1}^{4}\dgs(x_i)=3^2$, $x_1$ can not be taken as $5$ or $7$. Note that $\sum_2^4 \dgs(x_i)\geqslant 6$, we have $\dgs(x_1)\leqslant 3$. If $\dgs(x_1)=2$, then $x_1=11$ or $x_1$ is a prime like $10^k+1$. Thus $\sum_{i=2}^{4}\dgs(x_i)=7$. It infers that $x_2,x_3,x_4$ must be primes like $10\ldots 01$, i.e., $10^k+1$ or $10^k+10^h+1$. But $10^k+10^h+1$ is always a composite number. Hence, the only possible case is that $x_1=3$.

In the case of $x_1=3$, $x_2,x_3,x_4$ must be primes like $10^k+1$. It is easy to see that if $10^k+1$ is prime then $k$ must be a power of $2$. There are no primes of the form $10^k+1$ below $10^{16777216}+1=10^{2^{24}}+1$. There is a discussion about the numbers like $10^k+1$ in the physics forum \cite{physicsforums}.

Suppose there exist such primes like $10^{k}+1$. We consider the following equations:
\[
\begin{aligned}
3+11+(10^k+1)+(10^h+1)=a^2,\\
3+(10^r+1)+(10^s+1)+(10^t+1)=a^2,\\
\end{aligned}
\]
where $1<k\leqslant h$, $1<r\leqslant s\leqslant t$ are all positive integers.

For the first equation, $a^2+9=10^{k}(10^{h-k}+1)+25$, which is not a prime. For the second equation, $a^2=10^r+10^s+10^t+6$. It is not a square number for $r>1$.

Therefore, $b>3$.

%Let $a=2m$, then we have
%\[
%m^2=25\cdot 10^{k-2}(10^{h-k}+1)+4.
%\]
%By Lemma \ref{lem:1}, $m^2$ must be one of the forms $16n$, $16n+1$, $16n+4$, $16n+9$. It is easy to check that for $m^2$ the only possible form is $16n+4$. Then
%\[
%25\cdot 10^{k-2}(10^{h-k}+1)=16n.
%\]
%Thus
%\[
%n=25^3\cdot(10^{h-k}+1)\cdot 10^{k-6}.
%\]
\end{proof}

By using digit sum, we will have a more simpler proof and get a general result.
\begin{prop}
$b$ can not be a multiple of $3$.
\end{prop}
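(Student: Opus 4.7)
The plan is a straightforward mod $9$ argument built on Lemma \ref{lem:2} together with the primality condition on $a^2+b^2$.

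First, I would record the standard congruence $\dgs(N)\equiv N\pmod 9$ for every nonnegative integer $N$. This is an immediate consequence of iterating Lemma \ref{lem:2} on the digit-by-digit decomposition of $N$ (equivalently, it follows from $10\equiv 1\pmod 9$). Applying it both to each prime $x_i$ and to $a^2$, the first two lines of (\ref{eqn:main}) combine to give
\[
b^2 \;=\; \sum_{i=1}^{4}\dgs(x_i) \;\equiv\; \sum_{i=1}^{4} x_i \;=\; a^2 \pmod 9.
\]

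Second, suppose for contradiction that $3\mid b$. Then $9\mid b^2$, and by the congruence just derived $9\mid a^2$ as well, so $9\mid a^2+b^2$. Since $b\geq 3$, one has $a^2+b^2\geq 10$, so $a^2+b^2$ is a positive integer strictly greater than $9$ that is divisible by $9$; in particular it is composite, contradicting the fourth condition of (\ref{eqn:main}).

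There is essentially no serious obstacle here: the entire argument is an elementary residue computation. The main conceptual point is simply to notice that equations one and two of (\ref{eqn:main}) are congruent modulo $9$, after which the primality constraint on $a^2+b^2$ immediately rules out $3\mid b$. Note also that the third line of (\ref{eqn:main}), which controls carries in the addition $a^2+b^2$, is not needed for this proposition; it is the interaction between the digital-sum identity and primality that does all the work.
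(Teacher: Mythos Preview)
Your argument is correct and follows the same mod-$9$ strategy as the paper: use Lemma~\ref{lem:2} to transfer the second equation of \eqref{eqn:main} into a congruence with the first, deduce $9\mid a^2+b^2$ when $3\mid b$, and contradict primality.

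The one noteworthy difference is that the paper actually invokes the third line of \eqref{eqn:main}: it first shows $9\mid\dgs(a^2)$ and $9\mid\dgs(b^2)$, then uses $\dgs(a^2)+\dgs(b^2)=\dgs(a^2+b^2)$ to get $9\mid\dgs(a^2+b^2)$, and only then concludes $9\mid a^2+b^2$. Your route is more economical: from $b^2\equiv a^2\pmod 9$ you pass directly to $9\mid a^2$ and $9\mid a^2+b^2$ without ever touching $\dgs(a^2+b^2)$. Your remark that the carry condition is superfluous for this proposition is therefore a genuine (if small) sharpening of the paper's proof.
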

\begin{proof}
Let $b=3k$ in \eqref{eqn:main}, then
\[
\begin{cases}
x_1+x_2+x_3+x_4=a^2,\\
\dgs(x_1)+\dgs(x_2)+\dgs(x_3)+\dgs(x_4)=9k^2.\\
\end{cases}
\]
By Lemma \ref{lem:2},
\[
\begin{split}
&\dgs(x_1)+\dgs(x_2)+\dgs(x_3)+\dgs(x_4)\\
=&\dgs(x_1+x_2)+9u+\dgs(x_3+x_4)+9v\\
=&\dgs(x_1+x_2+x_3+x_4)+9w+9u+9v,\\
\end{split}
\]
where $u,v,w$ are integers. Hence, $9k^2=\dgs(a^2)+9K$ for some integer $K$. It infers that $9|(\dgs(a^2)+\dgs(b^2))$. Hence $a^2+b^2$ is a multiples of $9$ since $9|\dgs(a^2+b^2)$. It is a contradiction.
\end{proof}

\begin{prop}
$b$ doesn't equal $4$.
\end{prop}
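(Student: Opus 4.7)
My plan is to use Lemma~\ref{lem:2} to pin down $\dgs(a^2)$, combine this with condition (3) of \eqref{eqn:main} to constrain the last two digits of $a^2$, and then dispose of each remaining possibility by a case analysis modeled on the preceding proposition.

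First I would iterate Lemma~\ref{lem:2} along the four-term sum $x_1+x_2+x_3+x_4=a^2$, so that $\sum_{i=1}^{4}\dgs(x_i)=\dgs(a^2)+9c$ for the total carry count $c\ge 0$; together with $b^2=16$ and $\dgs(a^2)\equiv 7\pmod 9$ this forces $\dgs(a^2)\in\{7,16\}$. Next I would apply Lemma~\ref{lem:2} to (3) of \eqref{eqn:main}, which says $a^2+16$ is added without any carry; combining this with the possible last digits of squares and with (4) of \eqref{eqn:main} (which forces $a$ odd) pins the units digit of $a^2$ at $1$, and writing $a\equiv\pm 1\pmod{10}$ shows that the tens digit of $a^2$ is automatically even. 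The branch $\dgs(a^2)=16$ would then collapse immediately: no carry in the four-term sum makes the units digits of the four primes sum to $1$, but each is at least $1$ and there are four of them.

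For the remaining branch $\dgs(a^2)=7$, the same parity argument places the single carry at the units column, so the units digits sum to $11$ and no other column carries. Odd-ness of $a^2$ gives an odd number of $x_i$ equal to $2$. Three $2$'s forces $x_4=a^2-6$ to end in $5$, hence $x_4=5$ and $a^2=11$, not a square. If just one $x_i=2$, the remaining three odd primes satisfy $\sum\dgs=14=\dgs(a^2-2)$; by Lemma~\ref{lem:2} their three-term sum to $a^2-2$ is entirely carry-free, and column matching pins their tens-digit sum at $D_1-1$ and every higher-position digit sum at the corresponding digit of $a^2$. Because those digits together with $D_1$ sum to $6$ and everything is nonnegative, $D_1\in\{2,4,6\}$; the value $D_1=6$ leaves no higher digits, so $a^2=61$, not a square.

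The remaining work is to dispose of $D_1\in\{2,4\}$. For $D_1=4$ I would write $a^2=100M+41$ with $\dgs(M)=2$, note that mod~$8$ forces $M$ even (so $M=2\cdot 10^k$ or $M=10^j+10^k$ with $j,k\ge 1$), and observe that in every such case $a^2\equiv M+8\pmod{11}$ takes values in $\{6,8,10\}$, none of which are quadratic residues modulo $11$. For $D_1=2$ I would write $a^2=100M+21$ with $\dgs(M)=4$ and (by mod~$8$) $M$ odd, so the last digit of $M$ is $1$ or $3$; the handful of resulting sparse-digit sub-families of $M$ would each be killed by a combination of factoring $a^2-(10^m)^2$ when the relevant exponent is even (using that the factor pairs of $321$ and $21$ include no power of $10$) and a non-residue sieve modulo $7$, $11$, and $13$ (with Hensel lifts to $p^2$ whenever a sieve leaves $p\mid a$) when the exponent is odd, together with direct checks for the small exponents each sieve leaves. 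The hard part will be this last step, because no single small modulus simultaneously rules out every sparse-digit family of $M$, so the proof needs a careful but finite catalogue of modular obstructions.
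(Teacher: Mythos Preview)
Your reduction is correct and is in fact tidier than the paper's. The paper first isolates $x_1=2$, then splits into three sub-cases \eqref{eqn:case1}--\eqref{eqn:case3} according to the values of $\dgs(a^2)$ and $\dgs(a^2-2)$, and kills the two ``no-carry'' sub-cases by showing $a^2$ would have to end in $25$, which then violates condition~(3). You instead use condition~(3) immediately to force the units digit of $a^2$ to be $1$ (since $a^2+16$ must add without carry and $a$ is odd), and you dispose of $\dgs(a^2)=16$ in one line by the units-column count. Both routes land on exactly the same residual problem: $a^2$ is a number with digit sum $7$, units digit $1$, even tens digit, i.e.\ one of the seven sparse forms the paper lists just before \S4.1. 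Your mod-$11$ argument for $D_1=4$ is the same as the paper's Lemma~\ref{lem:41} (with $n=k$ allowed), packaged uniformly.

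Where you and the paper part ways is the endgame $D_1=2$, and here you should know two things. First, the paper itself does not complete this case: Lemma~\ref{lem:111} is argued by a rough size comparison that is not rigorous as written, and Lemma~\ref{lem:4} is explicitly labelled ``incomplete,'' with two sub-branches (the tail of case~(a2) and all of case~(c)) left open. So your honest admission that ``the hard part will be this last step'' matches the actual state of the paper. Second, your proposed toolkit is probably too small. The factoring trick $a^2-(10^m)^2=(a-10^m)(a+10^m)$ only applies to the single family $10^{2m}+321$; it does nothing for $3\cdot 10^n+121$ or the two- and three-term forms, since those have no perfect-square leading part. And the sieve moduli $7,11,13$ (even with lifts to $p^2$) are not enough: for $3\cdot 10^n+121$ the paper has to climb to moduli $37$, $169$, and finally $73$ before every residue class is excluded (Lemma~\ref{lem:3}), and the surviving families in Lemma~\ref{lem:4} resist even that. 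So your plan is sound in outline but underestimates how many moduli are needed, and it will not close the same gaps the paper leaves open.
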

\begin{proof}
If $b=4$, then $a$ must be odd. Then $x_1=2$. The system \eqref{eqn:main} takes the following form
\[
\begin{cases}
2+x_2+x_3+x_4=a^2,\\
2+\dgs(x_2)+\dgs(x_3)+\dgs(x_4)=4^2,\\
\dgs(a^2+16)=\dgs(a^2)+\dgs(16),\\
a^2+16\ \text{is a prime}.
\end{cases}
\]
It infers that
\[
\begin{cases}
x_2+x_3+x_4=a^2-2,\\
\dgs(x_2)+\dgs(x_3)+\dgs(x_4)=14,\\
\dgs(a^2+16)=\dgs(a^2)+7,\\
a^2+16\ \text{is a prime}.
\end{cases}
\]
By Lemma \ref{lem:2}, $\dgs(x_2+x_3+x_4)+9k=14$ for some integer $k$.

(1) If $k=0$, then $\dgs(x_2+x_3+x_4)=14=\dgs(a^2-2)$. From the equation $\dgs(a^2-2+18)=\dgs(a^2+16)=\dgs(a^2)+7$, we have
\[
\begin{split}
&\dgs(a^2-2)+\dgs(18)-9\ell=\dgs(a^2)+7\\
\Rightarrow&14+9-9\ell=\dgs(a^2)+7\\
\Rightarrow&\dgs(a^2)=16-9\ell.
\end{split}
\]
If $\ell=0$, then
\begin{equation}\label{eqn:case1}
\dgs(a^2)=16,\quad\dgs(a^2-2)=14.
\end{equation}
If $\ell=1$, then
\begin{equation}\label{eqn:case2}
\dgs(a^2)=7,\quad\dgs(a^2-2)=14.
\end{equation}

(2) If $k=1$, then $\dgs(x_2+x_3+x_4)=5=\dgs(a^2-2)$. Then,
\[
\begin{split}
&\dgs(a^2-2)+\dgs(18)-9\ell=\dgs(a^2)+7\\
\Rightarrow&5+9-9\ell=\dgs(a^2)+7\\
\Rightarrow&\dgs(a^2)=7-9\ell.
\end{split}
\]
In this case, $\ell=0$. Thus we have
\begin{equation}\label{eqn:case3}
\dgs(a^2)=7,\quad\dgs(a^2-2)=5.
\end{equation}

Note that
\[
\begin{split}
& \dgs(a^2)=\dgs(a^2-2+2)=\dgs(a^2-2)+\dgs(2)-9s\\
\Rightarrow & \dgs(a^2)=\dgs(a^2-2)+2-9s\\
\end{split}
\]
For case \eqref{eqn:case1}, we have $16=14+2-9s$. That is $s=0$, which means the addition $(a^2-2)+2$ has no carry.

For case \eqref{eqn:case2}, we have $7=14+2-9s$. That is $s=1$, which means the addition $(a^2-2)+2$ has a carry.

For case \eqref{eqn:case3}, we have $7=5+2-9s$. That is $s=0$, which means the addition $(a^2-2)+2$ has no carry.

{\bf Case \eqref{eqn:case1}.} Since $a$ is an odd number, $a^2$ must be end with $1,9$ or $25$. In this case, $(a^2-2)+2$ has no carry, hence we infer that $a^2$ will not end with $1$. Moreover, we assume $a^2+16$ is a prime number. Therefore $a^2$ should be the form of $100N+25$. If so, $a^2+16=100N+41$. Then we have
\[
\dgs(a^2+16)=\dgs(N)+5<\dgs(a^2)+7.
\]
Hence, it violate the restriction.

{\bf Case \eqref{eqn:case1}.} Since $a$ is an odd number, $a^2$ must be end with $1,9$ or $25$. In this case, $\dgs(a^2)=7$, thus $a^2$ will not end with $9$. Also $(a^2-2)+2$ has no carry, hence we infer that $a^2$ will not end with $1$. Therefore $a^2$ should be the form of $100N+25$. Similarly as above, $a^2+16=100N+41$. Then we have
\[
\dgs(a^2+16)=\dgs(N)+5<\dgs(a^2)+7,
\]
which violate the restriction.

{\bf Case \eqref{eqn:case2}.}

If the addition $(a^2-2)+2$ has one carry, in case \eqref{eqn:case2}, then the unit's digit of $a^2-2$ is $9$. (Remember that $a$ is odd.) Thus the unit's digit of $a^2$ is $1$. And the ten's digit must be even. Moreover, it cannot be $0$ or $8$ for the reason $\dgs(a^2-2)=14$ and $\dgs(a^2)=7$ respectively.

Since $61$ is not a square number, we only need to consider $a^2$ with $2$ or $4$ as the ten's digit. Also note that $a^2=8N+1$. Hence it looks like
\[
\begin{aligned}
10\cdots 010\cdots 041,\\
20\cdots 041,\\
30\cdots 0121,\\
10\cdots 0321,\\
10\cdots 020\cdots 0121,\\
20\cdots 010\cdots 0121,\\
10\cdots 010\cdots 010\cdots 0121.\\
\end{aligned}
\]
The lemmas in the next subsection will show that such numbers are all not perfect squares. Therefore, we conclude that $b$ doesn't equal $4$.
\end{proof}

\subsection{Some lemmas about non-square numbers}

\begin{lem}
Assume the number $N=a_n a_{n-1}\cdots a_2a_1a_0$ with 41 as the last two digits, and $a_2$ is odd. Then $N$ is not a square number. If $N=a_n a_{n-1}\cdots a_2a_1a_0$ with 21 as the last two digits, and $a_2$ is even. Then $N$ is not a square number.
\end{lem}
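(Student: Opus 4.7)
The plan is to reduce modulo $8$ and appeal to the list of quadratic residues. By Lemma \ref{lem:1}, a perfect square is congruent to $0, 1, 4,$ or $9 \pmod{16}$, which in particular forces it to be $\equiv 0, 1,$ or $4 \pmod 8$. Hence any integer $\equiv 5 \pmod 8$ cannot be a square, and I will show that both hypotheses of the lemma place $N$ in exactly this residue class.

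Since $1000 \equiv 0 \pmod 8$, only the last three digits contribute to $N \pmod 8$, and in general
\[
N \equiv 4 a_2 + 2 a_1 + a_0 \pmod 8.
\]
For the first case ($a_1 = 4$, $a_0 = 1$, $a_2$ odd) this evaluates to $N \equiv 4 a_2 + 9 \equiv 4 + 1 \equiv 5 \pmod 8$. For the second case ($a_1 = 2$, $a_0 = 1$, $a_2$ even) it evaluates to $N \equiv 4 a_2 + 5 \equiv 5 \pmod 8$. In both situations $N \equiv 5 \pmod 8$, and the conclusion follows at once from the list of quadratic residues.

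There is essentially no obstacle beyond selecting the correct modulus. Working modulo $10$ or $100$ would be insufficient because the two forbidden endings $41$ and $21$ are themselves residues of squares in general; and working modulo $16$ via Lemma \ref{lem:1} would also succeed but requires bringing in $a_3$, producing the slightly messier set $\{5, 13\}$ of residues. The reduction mod $8$ is the right compromise, since $2^3$ is the largest power of $2$ whose residue class is fully determined by $a_2 a_1 a_0$ alone, which matches exactly the information the hypothesis provides.
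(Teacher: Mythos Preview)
Your proof is correct and follows the same route as the paper: both arguments reduce modulo $8$ and show that in each case $N\equiv 5\pmod 8$, which is not a quadratic residue. The paper's version is just the one-line remark that $1000K+141\not\equiv 1\pmod 8$ and $1000K+21\not\equiv 1\pmod 8$ (using that an odd square must be $\equiv 1\pmod 8$), so your write-up is simply a more fleshed-out form of the same idea.
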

\begin{proof}
Note that $1000K+141\not\equiv 1\pmod 8$, $1000K+21\not\equiv 1\pmod 8$.
\end{proof}

\begin{lem}\label{lem:41}
The number $10^n+10^k+41$ is not a square number for any $n$ and $k$.
\end{lem}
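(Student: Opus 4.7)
The plan is to pick a small modulus $q$ such that $10^n+10^k+41$ always lies among the quadratic nonresidues modulo $q$, independently of $n$ and $k$. The natural candidate is $q=11$: since $10\equiv -1\pmod{11}$, every power $10^m$ collapses to $\pm 1$ regardless of the exponent, which is exactly the simplification one wants when $n$ and $k$ are arbitrary.

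With this choice I would reduce the expression as $10^n+10^k+41\equiv(-1)^n+(-1)^k+8\pmod{11}$. Depending on the parities of $n$ and $k$, this takes only three possible values: $10$ when both exponents are even, $8$ when they have opposite parity, and $6$ when both are odd. I would then list the quadratic residues modulo $11$ by squaring $1,2,3,4,5$ and reducing, obtaining the set $\{0,1,3,4,5,9\}$. Since none of $6,8,10$ lies in this set, $10^n+10^k+41$ is never a quadratic residue mod $11$, hence never a perfect square.

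This single congruence calculation covers every pair $(n,k)$ uniformly, including the degenerate cases $n=0$, $k=0$, and $n=k$, so no small-case bookkeeping is needed. I do not foresee any real obstacle. The only mildly nontrivial aspect is picking the right modulus: mod $8$, as in the preceding lemma, is insufficient here because $10^n+10^k+41\equiv 1\pmod 8$ whenever $n,k\geqslant 3$, while mod $11$ settles the lemma immediately via the $-1$ collapse.
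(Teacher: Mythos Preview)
Your proposal is correct and is essentially identical to the paper's own proof: both reduce modulo $11$, use $10\equiv -1\pmod{11}$ to get the three possible values $6,8,10$, and observe that none of these is a quadratic residue mod $11$. Nothing further is needed.
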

\begin{proof}
\[
\begin{aligned}
10^n &\equiv (-1)^n\quad(\mod 11),\\
10^k &\equiv (-1)^k\quad(\mod 11),\\
  41 &\equiv 8\quad(\mod 11).
\end{aligned}
\]
Hence, if $10^n+10^k+41\equiv q(\mod 11)$, then $q$ is one of the three numbers: $10,8,6$. But the remainders by $11$ of square numbers are $1,4,9,5,3,0$. Therefore, $10^n+10^k+41$ is not a square number for any $n$ and $k$.
\end{proof}

\begin{lem}\label{lem:321}
The number $10^n+321$ is not a square number.
\end{lem}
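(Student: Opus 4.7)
The plan is to split into two cases according to the parity of $n$. For $n = 2k$ even, I would rewrite $m^2 = 10^{2k} + 321$ as $(m - 10^k)(m + 10^k) = 321 = 3 \cdot 107$. Since $m > 10^k > 0$, we have $m + 10^k \geq 2 \cdot 10^k + 1$, which forces $2 \cdot 10^k + 1 \leq 321$, i.e., $k \leq 2$. The three small cases $k \in \{0, 1, 2\}$ (giving $322$, $421$, $10321$) are then ruled out by direct inspection.

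For $n$ odd, the plan is to find a single modulus that rules out all odd-$n$ residues at once. A convenient choice is modulo $101$: since $10^2 = 100 \equiv -1 \pmod{101}$, the odd powers of $10$ take only two values mod $101$, namely $10$ and $-10 \equiv 91$. Together with $321 \equiv 18 \pmod{101}$, the residue of $10^n + 321 \pmod{101}$ is either $28$ or $8$. A short Legendre-symbol computation would show both are non-residues: $(2/101) = -1$ (since $101 \equiv 5 \pmod{8}$), hence $(8/101) = (2/101)^3 = -1$; and $(28/101) = (7/101) = (3/7) = -1$ by quadratic reciprocity. This contradicts $m^2 \equiv 10^n + 321 \pmod{101}$.

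The main obstacle is locating a workable modulus for the odd case, since natural small choices such as $4, 8, 9, 11, 13, 25, 27, 81, 125$ all yield quadratic residues for $10^n + 321$. The useful observation is that one wants a prime $p$ where $10$ has very small multiplicative order, so that $10^n$ takes few values that can be checked individually; prime $p = 101$ has $\mathrm{ord}_{101}(10) = 4$, which makes the analysis essentially trivial.

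As an alternative more in the two-prime style of Lemma~\ref{lem:41}, one can combine modulo $7$ (which disposes of $n \equiv 3 \pmod{6}$ via the non-residue $5 \pmod{7}$, since $10^3 + 321 \equiv 12 \equiv 5$) with modulo $37$ (where $10^3 \equiv 1$ gives $10^n + 321 \equiv 35$ or $14 \pmod{37}$ for $n \not\equiv 0 \pmod{3}$, both non-residues); the residual case $n \equiv 0 \pmod{6}$ is covered by the even-$n$ factoring argument above.
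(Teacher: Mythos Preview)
Your proof is correct, and for the odd case it is genuinely cleaner than the paper's. The even case is essentially the same idea in both: the paper sandwiches $(10^k)^2 < 10^{2k}+321 < (10^k+1)^2$ for $k\ge 3$ and checks $k\le 2$ by hand, while you factor $(m-10^k)(m+10^k)=321$ to reach the same bound $k\le 2$; these are two phrasings of one argument.

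The difference lies in odd $n$. The paper works modulo $13$, where $10^{2k+1}+321\equiv 6,8,0\pmod{13}$; the residues $6,8$ are non-squares, but $0$ survives, leaving the case $n\equiv 5\pmod 6$. That residual case is then handled by a rather lengthy chain: one needs $169\mid 10^{6m+5}+321$, rewrites the quotient explicitly, and finishes with a modulo-$37$ obstruction. Your choice of modulus $101$ avoids all of this: since $\mathrm{ord}_{101}(10)=4$, odd powers of $10$ give only $\pm 10$, and both resulting residues $8$ and $28$ are non-squares, so a single Legendre-symbol check closes the case. The alternative you sketch (mod $37$ to kill $n\not\equiv 0\pmod 3$, mod $7$ to kill $n\equiv 3\pmod 6$, then the even argument for $n\equiv 0\pmod 6$) is also correct and still shorter than the paper's route. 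What your approach buys is brevity; what the paper's approach illustrates is how to proceed when an initial modulus leaves a divisibility case that must be upgraded to a square of that modulus.
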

\begin{proof}
If $N=10^{2k}+321$, $k\geqslant 3$, then
\[
(10^k)^2<10^{2k}+321<(10^k+1)^2.
\]
For $k=0,1,2$, it is easy to check that $N=10^{2k}+321$ is not a square number. Hence, $10^{2k}+321$ is not a square number.

If $N=10^{2k+1}+321$, then we consider the remainder of modular $13$.
\begin{equation}\label{eqn:10n-mod13}
\begin{aligned}
10^{2k+1} &\equiv 10,12,4\ (\mod 13),\\
10^{k} &\equiv 10, 9, 12, 3, 4, 1\ (\mod 13),\\
321 &\equiv 9\ (\mod 13),
\end{aligned}
\end{equation}
Thus $10^{2k+1}+321 \equiv 6,8,0\ (\mod 13)$.
The remainders by $13$ of the square numbers are $1,4,9,3,12,10,0$. Hence, if $N=10^{2k+1}+321$ is a square number, then it must be $10^{6m+5}+321$. It infers that $169|(10^{6m+5}+321)$. Thus, $169|(10^{6m+5}+152)$. Note that $9|10^{6m+5}+152$, we rewritten it as
\[
\begin{split}
10^{6m+5}+152&=(10^{6m+5}-1)+153=\underbrace{99\cdots 9}_{6m+5}+9\cdot 17=9\times \underbrace{11\cdots 1}_{6m+3}28\\
&=9\times 8\times 13\underbrace{88\cdots 8}_{6m}91.
\end{split}
\]
It is easy to check that $13|13\underbrace{88\cdots 8}_{6m}91$ for any $m\geqslant 0$ and $169|13\underbrace{88\cdots 8}_{72h}91$ for any $h\geqslant 1$.

Note that,
\[
13\underbrace{88\cdots 8}_{6m}91 \equiv 22\ (\mod 37),\quad\forall\ m.
\]
Thus,
\[
9\times 8\times 13\underbrace{88\cdots 8}_{6m}91+169\equiv 14\ (\mod 37).
\]

But the squares' remainders by $37$ are
\[
0, 1, 4, 9, 16, 25, 36, 12, 27, 7, 26, 10, 33, 21, 11, 3, 34, 30, 28.
\]
Therefore,
\[
10^{6m+5}+321=9\times 8\times 13\underbrace{88\cdots 8}_{6m}91+169
\]
is not a square number.
\end{proof}
%==> v.
%-: (0, 1, 4, 9, 16, 25, 36, 49, 64, 81, 100, 121, 144, 169, 196, 225, 256, 289,
%324, 361, 400, 441, 484, 529, 576, 625, 676, 729, 784, 841, 900, 961, 1024,
%1089, 1156, 1225, 1296, 1369, 1444, 1521, 1600, 1681, 1764, 1849, 1936, 2025,
%2116, 2209, 2304, 2401)
%
%==> v1:= v mod 37.
%-: (0, 1, 4, 9, 16, 25, 36, 12, 27, 7, 26, 10, 33, 21, 11, 3, 34, 30, 28, 28,
%30, 34, 3, 11, 21, 33, 10, 26, 7, 27, 12, 36, 25, 16, 9, 4, 1, 0, 1, 4, 9, 16,
%25, 36, 12, 27, 7, 26, 10, 33)

% Mod 17
%(10, 14, 6, 5, 7, 3,11, 12, 10, 14, 6)
%(15, 15,15,15,15,15,15, 15, 15, 15,15)
%(25, 29,21,20,22,18,26, 27, 25, 29,21)
%
%(8, 12, 4, 3, 5, 1, 9, 10)
%
% 平方剩余
%(1,  4, 9,16, 8, 2,15, 13, 0)
% 共同的有 8,4,1,9.

\begin{lem}\label{lem:3}
The number $3\cdot 10^n+121$ is not a square number.
\end{lem}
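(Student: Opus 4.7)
The plan is to exploit the identity $121 = 11^2$ to rewrite the putative equation $s^2 = 3 \cdot 10^n + 121$ as the difference-of-squares factorization
\[
(s - 11)(s + 11) = 3 \cdot 2^n \cdot 5^n,
\]
and then show that no way of splitting the right-hand side into two factors with the forced gap on the left is possible. I would first dispose of $n = 0$ and $n = 1$ by direct check, since $124$ and $151$ are not squares. For $n \geq 2$, the right-hand side is even, forcing $s - 11$ and $s + 11$ to share parity and hence both to be even, so $s$ is odd; setting $s - 11 = 2u$ and $s + 11 = 2v$ yields $v - u = 11$ and $uv = 3 \cdot 2^{n-2} \cdot 5^n$.

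Since $v - u = 11$ is prime, $\gcd(u, v) \in \{1, 11\}$; the value $11$ is ruled out because $121 \nmid 3 \cdot 2^{n-2} \cdot 5^n$, so $u$ and $v$ must be coprime. Coprimality forces each of the prime powers $2^{n-2}$, $3$, $5^n$ to lie entirely in one of the two factors, leaving only four unordered coprime factorizations of the product:
\[
\{1,\ 3 \cdot 2^{n-2} \cdot 5^n\},\ \{3,\ 2^{n-2} \cdot 5^n\},\ \{2^{n-2},\ 3 \cdot 5^n\},\ \{5^n,\ 3 \cdot 2^{n-2}\}.
\]
A routine estimate then shows that for every $n \geq 2$ the larger entry in each of these pairs exceeds the smaller by at least $22$, with equality attained at $n = 2$ by $\{3, 25\}$. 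Hence a gap of exactly $11$ never occurs, contradicting $v - u = 11$.

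The main obstacle is conceptual rather than computational: one has to spot that $121 = 11^2$ unlocks a useful factorization, after which coprimality reduces an a priori infinite Diophantine problem to a handful of explicit inequalities. Some care is needed at the boundary $n = 2$, where $2^{n-2} = 1$ causes two of the four pairs to collapse and where the bound $22 > 11$ is tight; no further modular arithmetic (such as mod $11$, which would separately dispatch odd $n$) is required.
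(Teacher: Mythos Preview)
Your argument is correct. Writing $s^2-121=(s-11)(s+11)=3\cdot 2^n\cdot 5^n$, reducing to $uv=3\cdot 2^{n-2}\cdot 5^n$ with $v-u=11$ and $\gcd(u,v)=1$, and then checking that each of the four coprime splittings of $3\cdot 2^{n-2}\cdot 5^n$ has gap at least $22$ for $n\ge 2$ is a clean and complete proof. The monotonicity of each gap in $n$ (so that the minimum $22$ really is attained at $n=2$) is indeed routine, and your handling of the degeneration $2^{n-2}=1$ at $n=2$ is fine.

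This is a genuinely different route from the paper's. The paper proceeds by a cascade of modular sieves: working modulo $37$ forces $n\equiv 1\pmod 3$; then modulo $11$ forces $n\equiv 4\pmod 6$; then modulo $13$ gives $13\mid N$, and lifting modulo $169$ pins down $n\equiv 16\pmod{78}$; finally a computation modulo $73$ shows the remaining residues are non-squares. Your factorization approach is considerably shorter and more conceptual: it turns the problem into a finite check of divisor gaps rather than a multi-step congruence chase, and it requires no modular tables at all. The paper's method, by contrast, illustrates a general sieving technique that it reuses for the neighbouring lemmas (e.g.\ $10^n+321$), where no convenient difference-of-squares identity is available; your trick is specific to the constant term being a perfect square.
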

\begin{proof}
\begin{equation}\label{eqn:10n-mod37}
10^n\equiv\begin{cases}
10\ (\mod 37), & n=3k+1,\\
26\ (\mod 37), & n=3k+2,\\
1 \ (\mod 37), &  n=3k.
\end{cases}
\end{equation}
Thus,
\[
3\cdot 10^n+121\equiv\begin{cases}
3 \ (\mod 37), & n=3k+1,\\
14\ (\mod 37), & n=3k+2,\\
13\ (\mod 37), &  n=3k.
\end{cases}
\]
While the square numbers' remainders under module $37$ are
\begin{equation}\label{eqn:b2-mod37}
0, 1, 4, 9, 16, 25, 36, 12, 27, 7, 26, 10, 33, 21, 11, 3, 34, 30, 28,
\end{equation}
we conclude that if $3\cdot 10^n+121$ is a square number, $n$ must be the form like $3k+1$.

If $k$ is even, then $N:=3\cdot 10^{3k+1}+121\equiv 8(\mod 11)$, it will not be a square number since
\[
a^2\equiv (0,1,4,9,5,3)\ \mod 11
\]
for any positive integer $a$. Then $k$ must be odd. Suppose $k=2h+1$, then $N=3\cdot 10^{6h+4}+121$. By the first equation in \eqref{eqn:10n-mod13}, we have
\[
3\cdot 10^{6h+4}+121\equiv 0\ (\mod 13).
\]
If $3\cdot 10^{6h+4}+121$ is a square number, then $169|(3\cdot 10^{6h+4}+121)$.

Note that
\[
10^{6h+4}\equiv(29,107,16,94,3,81,159,68,146,55,133,42,120)\ \mod 169.
\]
Then we have
\[
3\cdot 10^{6h+4}+121\equiv(39, 104, 0, 65, 130, 26, 91, 156, 52, 117, 13, 78, 143)\ \mod 169.
\]
It infers that $10^{6h+4}\equiv 16\ (\mod 169)$. And thus the number $N=3\cdot 10^{6h+4}+121$ takes the form
\[
N=3\cdot 10^{78k+16}+121\equiv 0\ (\mod 169).
\]
Since
\begin{equation}\label{eqn:10n-mod73}
10^n\equiv(10, 27, 51, 72, 63, 46, 22, 1)\ \mod 73,
\end{equation}
we have $10^{78}\equiv 46\ (\mod 73)$ and $10^{16}\equiv 1\ (\mod 73)$. Hence,
\[
10^{78k+16}\equiv 46^k\ (\mod 73)\equiv(46, 72, 27, 1)\ \mod 73.
\]
Therefore, we have
\[
3\cdot 10^{78k+16}+121\equiv (40, 45, 56, 51)\ \mod 73.
\]
But for the square number $b^2$,
\begin{equation}\label{eqn:a2-mod73}
\begin{split}
b^2\equiv &(0, 1, 4, 9, 16, 25, 36, 49, 64, 8, 27, 48, 71, 23, 50,\\
          &\ 6, 37, 70, 32, 69,35, 3, 46, 18, 65, 41, 19, 72, 54,\\
          &\ 38, 24, 12, 2, 67, 61, 57, 55)\ \mod 73.
\end{split}
\end{equation}
The list does not contain the numbers $40, 45, 56, 51$. Therefore, we conclude that $3\cdot 10^{78k+16}+121$ is not a square number.
\end{proof}

Generally, it is hard to prove the following result by using our method above. So here we use a different way to prove it.
\begin{lem}\label{lem:111}
The number like $10^m+10^n+10^k+121$ with $m\geqslant n\geqslant k$ is not a square number.
\end{lem}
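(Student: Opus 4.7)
The plan is to exploit the identity $121 = 11^2$ to turn the equation into a factorisation, and then to analyse how the prime factors of $10^k$ must distribute between the two factors of $a^2 - 121$. Assuming for contradiction that $a^2 = 10^m + 10^n + 10^k + 121$, I rewrite
\[
(a-11)(a+11) \;=\; 10^k\bigl(10^{m-k} + 10^{n-k} + 1\bigr) \;=:\; 10^k P,
\]
and record the basic properties of $P$: it is odd, coprime to $5$, and satisfies $3 \mid P$ but $9 \nmid P$ (since $10^j \equiv 1 \pmod 9$ forces $P \equiv 3 \pmod 9$). Also $\gcd(P,11) = 1$, so the only primes that can be shared between the two factors are $2$ and $11$.

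First I would dispose of the cases $k \in \{0,1,2\}$ by inspecting the last few digits of $N$ directly: the last digit is $2$ when $k=0$, the last two digits are $31$ when $k=1$, and the last three digits $221$ fail modulo $8$ when $k=2$. So from now on $k \geq 3$; then the last three digits of $N$ are exactly $121$, and $a^2 \equiv 121 \pmod{1000}$ forces $a$ odd with $a \equiv \pm 11 \pmod{250}$.

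Next, since $a$ is odd, both $a \pm 11$ are even and $\gcd(a-11, a+11) \mid 22$. Consequently $5^k$ divides exactly one of $a \pm 11$, while $2^{k-1}$ concentrates in exactly one of the two and $2^1$ in the other. This produces, up to the obvious symmetry swapping the two factors, two essentially distinct configurations. In each one, setting for instance $a+11 = 2 \cdot 5^k u$ and $a - 11 = 2^{k-1} v$ (with $u,v$ coprime to $10$) and combining the difference $(a+11) - (a-11) = 22$ with the product $(a+11)(a-11) = 10^k P$ yields a quadratic Diophantine equation of the shape
\[
25 \cdot 10^{k-2} x^2 \pm 11 x \;=\; P \qquad \text{or} \qquad 5^k y^2 \pm 11 y \;=\; 2^{k-2} P.
\]

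The final and hardest step is to rule out each such Diophantine equation. My plan is to reduce successively modulo $10$, $100$, and $1000$, using the very restrictive decimal shape of $P$ (three $1$-digits with zeros elsewhere) to pin the unknown into ever-narrower residue classes, and then to compare with the size estimate $x \asymp 10^{(m-2k)/2}$ obtained by matching the leading terms on both sides. At that point the error term $\pm 11 x$ has the wrong order of magnitude to produce the middle term $10^{n-k}$ of $P$, yielding the contradiction. The degenerate overlap cases $m = n$, $n = k$, and $m = n = k$ (the last of which reduces to Lemma \ref{lem:3}) will need brief separate handling. The main obstacle I anticipate is controlling all four sub-cases of the $2$-adic / $5$-adic splitting simultaneously; this is precisely why the direct modular method used in Lemmas \ref{lem:321} and \ref{lem:3} does not generalise cleanly to a three-parameter family, motivating the author's remark that a different method is needed.
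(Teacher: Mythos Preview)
Your route is entirely different from the paper's. The paper does not factor $a^2-121$ or do any $p$-adic bookkeeping; it gives a three-line size argument: for $k$ larger than some fixed $M$ one has $\bigl|\sqrt{N}-\sqrt{N-121}\bigr|<0.01$, it asserts that $10^m+10^n+10^k$ is never a perfect square, concludes from these two facts that $N$ is not a square either, and defers the remaining finitely many cases to a computer check (which it omits). Your decomposition $(a-11)(a+11)=10^kP$ together with the valuation analysis is correct and considerably more structural; if it could be completed it would yield a self-contained arithmetic proof rather than an analytic estimate plus an unspecified finite verification.

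The proposal, however, has a genuine gap exactly where you flag the ``final and hardest step.'' Your intended contradiction is that the linear term $\pm 11x$ has the wrong order of magnitude to account for the middle summand $10^{n-k}$ of $P$. But $n$ is a \emph{free} parameter with $k\le n\le m$, so $10^{n-k}$ can sit at any scale between $1$ and $10^{m-k}$; in particular nothing prevents $2^{k-2}\cdot 10^{n-k}$ from having the same order as $11x$ when $n-k$ is roughly half of $m-k$. The modular reductions you propose (mod $10$, $100$, $1000$) constrain $x$ only in a fixed residue class and do not see the floating exponent $n$, so they cannot by themselves force the mismatch you want. In short, the machinery you set up is sound, but the closing argument as sketched does not go through, and you have not indicated what replaces it. A smaller point: your disposal of $k\in\{0,1,2\}$ silently assumes strict inequalities $m>n>k$; for instance with $k=n=1$ the last two digits of $N$ are $41$, not $31$, so those degenerate boundary cases still need attention.
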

\begin{proof}
First it is easy to see that there exist a positive integer $M>0$, such that for $k>M$, we have
\[
\Bigl|\sqrt{10^m+10^n+10^k+121}-\sqrt{10^m+10^n+10^k}\Bigr|<0.01.
\]
Similarly, for $n$ and $m$, we get a similar estimate.

Second, $\sqrt{10^m+10^n+10^k}$ is not a square number. 

Hence $\sqrt{10^m+10^n+10^k+121}$ is not a square number too.

At last, we only need to check finitely many numbers like the form 
\[
\sqrt{10^m+10^n+10^k+121}. 
\]
They are not square numbers. Here we omit the checking.
\end{proof}

It is difficult to prove the following lemma in our way. Lemma \ref{lem:4} is a special case of Lemma \ref{lem:111}. Nevertheless, we give the incomplete proof here for someone may be interesting in it.

\begin{lem}\label{lem:4}
The number like $2\cdot 10^n+10^m+121$ is not a square number.
\end{lem}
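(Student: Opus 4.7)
The plan is to mirror the template of Lemma \ref{lem:3} and Lemma \ref{lem:111}: reduce the degenerate case to a previous lemma, combine modular obstructions with a sandwich between two consecutive integer squares, and check a finite remainder directly. First, if $n = m$ the quantity collapses to $3\cdot 10^n + 121$, which is not a perfect square by Lemma \ref{lem:3}, so I may assume $n \neq m$.

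Next I would cut the problem down with modular arithmetic. Since $10 \equiv -1$ and $121 \equiv 0 \pmod{11}$, setting $N = 2\cdot 10^n + 10^m + 121$ gives $N \equiv 2(-1)^n + (-1)^m \pmod{11}$; the two subcases with $n$ odd give $8$ or $10$, both non-residues, forcing $n$ to be even. Modulo $37$ (with the period-$3$ cycle $10^k \equiv 1, 10, 26$ and $121 \equiv 10$), only three of the nine combinations of $(n \bmod 3, m \bmod 3)$ land in the quadratic-residue list \eqref{eqn:b2-mod37}, leaving, after combining with the parity of $n$, three arithmetic progressions in $(n, m)$ to examine. Iterating with moduli such as $13$, $169$, $73$, in the style of Lemma \ref{lem:321}, should further restrict the surviving congruence classes.

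Then a size argument handles most surviving cases. Assume $m > n$ (the subcase $n \geq m$ is symmetric, using $\lfloor 10^{\lfloor n/2\rfloor}\sqrt{2}\rfloor$ and the irrationality of $\sqrt{2}$ to keep $\sqrt{N}$ away from integers). Writing $m = 2M$ even, one checks the elementary inequalities, valid for $M \geq 2$: if $n \leq M - 1$ then $10^{2M} < N < (10^M + 1)^2$; if $n = M$ then $(10^M + 1)^2 < N < (10^M + 2)^2$; and if $n = M + 1$ then $(10^M + 10)^2 < N < (10^M + 11)^2$. In each case $N$ lies strictly between two consecutive squares, so cannot be one. The subcase $m = 2M + 1$ odd is analogous, using $r = \lfloor 10^M \sqrt{10}\rfloor$ as the reference integer.

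The main obstacle will be the regime $n - M \geq 2$: the candidate $(10^M + 10^{n-M})^2$ exceeds $N$ by $10^{2(n-M)} - 121$, which outstrips the distance $2(10^M + 10^{n-M}) - 1$ down to the next square as soon as $n - M > M/2$, so the two-sided sandwich collapses. Here I would fall back on the factorization $(a - 11)(a + 11) = a^2 - 121 = 2\cdot 10^n + 10^m$: since $\gcd(a - 11, a + 11) \mid 22$ and the right-hand side has only $2$, $3$, $5$ (and possibly $11$) as prime factors, matching $2$-adic and $5$-adic valuations of $a \pm 11$ produces a Pell-type system with only finitely many candidates in each congruence class surviving the modular step. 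As the author acknowledges, this final step is delicate, and any residual gap can be closed by appealing to Lemma \ref{lem:111} with $k = n$.
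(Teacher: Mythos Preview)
Your outline tracks the paper's closely: dispose of $n=m$ via Lemma~\ref{lem:3}, sieve with moduli $11$, $37$, $13$, and attempt a sandwich between consecutive squares in the surviving classes. The paper organises the split the other way round (mod~$37$ first, then mod~$11$ and mod~$13$ inside each branch), but the substance is the same. Crucially, the paper's own direct argument for this lemma is explicitly \emph{incomplete}: two branches (case~(a2) with $k>h$, and the whole of case~(c)) are abandoned with ``it is hard to prove,'' and the statement is ultimately justified only as the special case $m=n$ (i.e.\ two equal exponents) of Lemma~\ref{lem:111}. Your closing fallback to Lemma~\ref{lem:111} is therefore exactly what the paper itself relies on.

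There is, however, a genuine error in your factorisation step. You write $(a-11)(a+11)=2\cdot 10^n+10^m$ and assert that the right-hand side ``has only $2$, $3$, $5$ (and possibly $11$) as prime factors.'' This is false: one has $2\cdot 10^n+10^m=10^{\min(n,m)}\bigl(2\cdot 10^{\,n-m}+1\bigr)$ or $10^{\min(n,m)}\bigl(10^{\,m-n}+2\bigr)$, and the cofactor admits arbitrary prime divisors (for instance $2\cdot 10+1=3\cdot 7$, $10^2+2=2\cdot 3\cdot 17$, $10^3+2=2\cdot 3\cdot 167$). Hence no ``Pell-type system'' arises from matching $2$- and $5$-adic valuations of $a\pm 11$, and that branch of your plan collapses. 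The sandwich inequalities you list for small $n-M$ are correct, but as you yourself observe they fail once $n-M$ exceeds roughly $M/2$; with the factorisation gone you have no independent argument in that regime, and you are reduced---just like the paper---to invoking Lemma~\ref{lem:111}.
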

\begin{proof}
(Note that the proof is {\bf incomplete} here. Only some cases are proved.)

With Lemma \ref{lem:3}, we only need to prove the case $n\neq m$.
From the formula \eqref{eqn:10n-mod37}, we have
\[
2\cdot 10^n+10^m+121\equiv (3,19,31,35,14,26,22,1,13)\ \mod 37.
\]
Compare with \eqref{eqn:b2-mod37}, if $2\cdot 10^n+10^m+121$ is a square number, then
\[
2\cdot 10^n+10^m+121\equiv (3,26,1)\ \mod 37.
\]
Which correspond to the following three cases:
\[
\begin{aligned}
(a)\quad & n=3k+1,\ m=3h+1;\\
(b)\quad & n=3k+2,\ m=3h;\\
(c)\quad & n=3k,\ m=3h+2.\\
\end{aligned}
\]
{\bf(a)} For $N=2\cdot 10^{3k+1}+10^{3h+1}+121$,
\[
N\equiv\begin{cases}
8\ (\mod 11), & \text{if}\ k\ \text{and}\ h\ \text{are both even}, \quad(a1)\\
3\ (\mod 11), & \text{if}\ k\ \text{and}\ h\ \text{are both odd}, \quad(a2)\\
10\ (\mod 11), & \text{if}\ k\ \text{is even and}\ h\ \text{is odd}, \quad(a3)\\
1\ (\mod 11), & \text{if}\ k\ \text{is odd and}\ h\ \text{is even}. \quad(a4)\\
\end{cases}
\]
Note that
\[
a^2\equiv (0,1,4,9,5,3)\ \mod 11.
\]
So we only need to consider the two cases: (a2) and (a4).

(a4) For $k$ is odd and $h$ is even, with $2k+1$ substitute $k$ and $2h$ substitute $h$, we have
\[
\begin{split}
N&=2\cdot 10^{3(2k+1)+1}+10^{3(2h)+1}+121\\
&=2\cdot 10^{6k+4}+10^{6h+1}+121\equiv 7\ (\mod 13).
\end{split}
\]
It is not a square number since
\begin{equation}\label{eqn:a2-mod13}
a^2\equiv(0, 1, 4, 9, 3, 12, 10)\ (\mod 13).
\end{equation}

(a2) For $k$ and $h$ are both odd, then
\[
\begin{split}
N&=2\cdot 10^{3(2k+1)+1}+10^{3(2h+1)+1}+121\\
&=2\cdot 10^{6k+4}+10^{6h+4}+121\\
\end{split}
\]
If $h>2k\geqslant 1$, it is easy to see that
\[
(10^{3h+2})^2=10^{6h+4}<N<(10^{3h+2}+1)^2.
\]
If $\frac{4}{3}k<h\leqslant 2k$, let $\ell=6k-3h+2$, then we have
\[
(10^{3h+2}+10^{\ell}-1)^2<N<(10^{3h+2}+10^{\ell})^2.
\]

If $k<h\leqslant\frac{4}{3}k$, then $N<(10^{3h+2}+10^{\ell}-1)^2$.
Suppose $N=(10^{3h+2}+10^{\ell}-q)^2$, then we have
\[
(10^{3h+2}+10^{\ell})^2-(10^{\ell})^2+121=(10^{3h+2}+10^{\ell})^2-2q(10^{3h+2}+10^{\ell})+q^2,
\]
which infers that
\[
(10^{\ell})^2-11^2=q\cdot\Bigl[2(10^{3h+2}+10^{\ell})-q\Bigr].
\]
Note that
\[
\begin{split}
N&=2\cdot 10^{3(2k+1)+1}+10^{3(2h+1)+1}+121\\
&=2\cdot 10^{6k+4}+10^{6h+4}+121\\
&\equiv 0\ (\mod 13).
\end{split}
\]
We have $13|(10^{3h+2}+10^{\ell}-q)$. Let $q=10^{3h+2}+10^{\ell}-13M$. Then
\[
(10^{3h+2}+10^{\ell}-11)(10^{3h+2}+10^{\ell}+11)=(10^{3h+2}+10^{\ell}-13M)(10^{3h+2}+10^{\ell}+13M).
\]
But, $\frac{4}{3}k<h$ infers that $3h\geqslant 4k+1$. Thus $3h+2>2\ell$, and
\[
10^{3h+2}+10^{\ell}+13M > 10^{2\ell}+10^{\ell}+13M > (10^{\ell})^2-11^2,
\]
It is a contradiction.

That is, we have proved the number like $10^{3h+1}+2\cdot 10^{3k+1}+121$ with $h>k$ is not a square number.

{\bf Now we assume $k>h$.} The case $h=k$ has been solved in Lemma \ref{lem:3}.

{\bf But it is hard to prove.}

\medskip

{\bf(b)} For $N=2\cdot 10^{3k+2}+10^{3h}+121$,
\[
N\equiv\begin{cases}
8\ (\mod 11), & \text{if}\ k\ \text{and}\ h\ \text{are odd}, \quad(b1)\\
3\ (\mod 11), & \text{if}\ k\ \text{and}\ h\ \text{are even}, \quad(b2)\\
1\ (\mod 11), & \text{if}\ k\ \text{is even and}\ h\ \text{is odd}, \quad(b3)\\
10\ (\mod 11), & \text{if}\ k\ \text{is odd and}\ h\ \text{is even}. \quad(b4)\\
\end{cases}
\]
Thus, we only need to consider the two cases: (b2) and (b3).

(b3) For $k$ even and $h$ odd, we have
\[
\begin{split}
N&= 2\cdot 10^{3(2k)+2}+10^{3(2h+1)}+121\\
&=2\cdot 10^{6k+2}+10^{6h+3}+121\\
&\equiv 8\ (\mod 13).
\end{split}
\]
So, it is not a square number since \eqref{eqn:a2-mod13}.

(b2) For $k$ and $h$ are both even,
\[
N=2\cdot 10^{3(2k)+2}+10^{3(2h)}+121=2\cdot 10^{6k+2}+10^{6h}+121.
\]

%\[
%\begin{split}
%N&=2\cdot 10^{3(2k)+2}+10^{3(2h)}+121\\
%&=2\cdot 10^{6k+2}+10^{6h}+121\\
%&\equiv 2\cdot 26+1+10\ (\mod 37)\\
%&\equiv 26\ (\mod 37).
%\end{split}
%\]

%\[
%\begin{split}
%N&=2\cdot 10^{3(2k)+2}+10^{3(2h)}+121\\
%&=2\cdot 10^{6k+2}+10^{6h}+121\\
%&\equiv 2\cdot 9+1+4\ (\mod 13)\\
%&\equiv 10\ (\mod 13).
%\end{split}
%\]

Note that $10^n\equiv(3, 2, 6, 4, 5, 1)\ \mod 7$, we have
\[
\begin{split}
N&=2\cdot 10^{6k+2}+10^{6h}+121\\
&\equiv 2\cdot 2+1+2\ (\mod 7)\\
&\equiv 0\ (\mod 7).
\end{split}
\]
If $N$ is a square number, then $49|N$. Since $N$ ends with digit $1$, we have
\[
49\times(10M+9)=N.
\]
%Since $N=2\cdot 10^{6k+2}+10^{6h}+121\equiv 3(\mod 11)$ and $49\equiv 5(\mod 11)$, we have $10M+9\equiv 5(\mod 11)$.

Since $N=2\cdot 10^{6k+2}+10^{6h}+121\equiv 8(\mod 13)$ and $49\equiv 10(\mod 13)$, we have $10M+9\equiv 6(\mod 13)$. Thus $10M+9$ will not be a square number by \eqref{eqn:a2-mod13}.

Therefore, $N=2\cdot 10^{6k+2}+10^{6h}+121$ is not a square number.

\medskip

\noindent{\bf (c)} For $N=2\cdot 10^{3k}+10^{3h+2}+121$, {\bf it is hard to prove.}

According \eqref{eqn:10n-mod13}, we have
\[
N\equiv\begin{cases}
6\ (\mod 13), & \text{if}\ k\ \text{and}\ h\ \text{are odd}, \quad(c1)\\
2\ (\mod 13), & \text{if}\ k\ \text{and}\ h\ \text{are even}, \quad(c2)\\
10\ (\mod 13), & \text{if}\ k\ \text{is even and}\ h\ \text{is odd}, \quad(c3)\\
11\ (\mod 13), & \text{if}\ k\ \text{is odd and}\ h\ \text{is even}. \quad(c4)\\
\end{cases}
\]
Since $a^2\equiv(0,1,4,9,3,12,10)\mod 13$, we only need to consider the (c3) case.

Thus
\[
N=2\cdot 10^{6k}+10^{6h+5}+121\equiv 2(\mod 7),
\]
which infers that $N=(7K\pm 3)^2$. And $N\equiv 1(\mod 11)$.

We consider mod 73 for $N$. And according \eqref{eqn:10n-mod73}, we list the following cases. 

%(Here $k\rightsquigarrow 4k$ means $k$ is substituted by $4k$, others are similar.)
%
%(1) $k\rightsquigarrow 4k$ and $h\rightsquigarrow 4h$, we have
%\[
%N=2\cdot 10^{24k}+10^{24h+5}+121\equiv 2\cdot 1+63+121\equiv 40(\mod 73).
%\]
%
%(1) $k\rightsquigarrow 4k$ and $h\rightsquigarrow 4h+1$, we have
%\[
%N=2\cdot 10^{24k}+10^{24h+11}+121\equiv 2\cdot 1+51+121\equiv 28(\mod 73).
%\]

Since $10^n\equiv(10,100,1)\mod 999$, we have
\[
N=2\cdot 10^{6k}+10^{6h+5}+121\equiv 223\ (\mod 999).
\]
Similarly, there are only following four kind numbers with remainder $223$.
\[
\begin{aligned}
(999k+149)^2\equiv 223\ (\mod 999),\\
(999k+445)^2\equiv 223\ (\mod 999),\\
(999k+554)^2\equiv 223\ (\mod 999),\\
(999k+850)^2\equiv 223\ (\mod 999).\\
\end{aligned}
\]
(I was inspired from these equations. By computing I found an interesting formula and prove it (see \cite{Xu}).)
\end{proof}

%\noindent{\bf Question.}
%Does there exist a prime $p$ with digit sum equals 10 and $p+6$ is a square number?

\noindent{\bf Acknowledgments :}
we express our gratitude to Professor Rongzheng Jiao of Yangzhou University.

%-----------bibliography----------

%------------------------------------------
%Information of authors
\bigskip

\noindent Haifeng Xu\\
School of Mathematical Sciences\\
Yangzhou University\\
Jiangsu China 225002\\
hfxu@yzu.edu.cn\\

%\medskip
%
%\noindent Jiuru Zhou\\
%School of Mathematical Sciences\\
%Yangzhou University\\
%Jiangsu China 225002\\
%zhoujr1982@hotmail.com\\

%%-------------------------


\begin{thebibliography}{10}

\bibitem{weak-Goldbach-conjecture}
{\url{https://en.wikipedia.org/wiki/Goldbach\%27s\_weak\_conjecture}}

\bibitem{Dudek}
{A.~W.~Dudek},
{\it On the Riemann Hypothesis and the difference between primes}, 
Int. J. Number Theory (2014), 1-–8, DOI:10.1142/S1793042115500426.

\bibitem{physicsforums}
{\url{https://www.physicsforums.com/threads/primes-of-form-10-k-1.392807/}}


\bibitem{digit_root}
{\url{http://en.wikipedia.org/wiki/Digital\_root}}

\bibitem{Xu}
{Haifeng Xu}, {\it The largest cycles consist by the quadratic residues and Fermat primes}, arXiv:1601.06509.

\end{thebibliography}
\end{document}